\documentclass{amsart}

\textwidth=14cm \textheight=19cm

\usepackage{amssymb} \usepackage{amsfonts} \usepackage{amsmath}
\usepackage{amsthm} \usepackage{epsfig} \usepackage{wrapfig}

\newtheorem*{theorem}{Theorem}
\newtheorem*{prop}{Proposition}
\newtheorem*{corollary}{Corollary}

\author{Evgeny Fominykh}
\author{Andrei Vesnin}

\address{Chelyabinsk State University, Chelyabinsk \newline
 \hspace*{10pt} Institute of Mathematics and Mechanics, Ekaterinburg}

 \email{fominykh@csu.ru}

\address{Sobolev Institute of Mathematics, Novosibirsk}

 \email{vesnin@math.nsc.ru}

\title{Exact values of complexity for Paoluzzi~-- Zimmermann manifolds}

\subjclass[2000]{Primary: 57M25, Secondary: 20F36.}

\keywords{3-manifold, complexity, hyperbolic manifold with boundary}

\thanks{The authors were supported by the Russian Foundation for Basic Research,
grants 10-01-91056 (A. Vesnin) and 11-01-00605 (E. Fominykh), and
the Joint Program of the Institute of Mathematics and Mechanics of
the Ural Branch of the Russian Academy of Sciences and the Institute
of Mathematics of the Siberian Branch of the Russian Academy of
Sciences.}

\begin{document}

\begin{abstract}
 There are found exact values of (Matveev) complexity for the 2-parameter
family of hyperbolic 3-manifolds with boundary constructed by
Paoluzzi and Zimmermann. Moreover, $\varepsilon$-invariants for
these manifolds are calculated.
\end{abstract}

\maketitle

\section{Preliminaries}

 Let $M$ be a compact $3$-manifold with nonempty boundary. Recall
\cite{MatBook} that a subpolyhedron $P\subset M$ is a \emph{spine}
of $M$ if the manifold $M\setminus P$ is homeomorphic to $\partial M
\times (0,1]$. A spine $P$ is said to be \emph{almost simple} if the
link of every of its points can be embedded into a complete graph
$K_4$ with four vertices. A \emph{true vertex} of an almost simple
spine $P$ is a point with the link $K_4$. The \emph{complexity}
$c(M)$ of $M$ is defined as the minimum possible number of true
vertices of an almost simple spine of $M$.

 An almost simple spine $P$ is called \emph{simple} if the link of
each point $x\in P$ is homeomorphic to one of the following
$1$-dimensional polyhedra: (a) a circle (such a point $x$ is called
nonsingular); (b) a circle with a diameter (such an $x$ is a triple
point); (c) $K_4$. Components of the set of nonsingular points are
said to be $2$-\emph{components} of $P$, while components of the set
of triple points are said to be \emph{triple lines} of $P$. A simple
spine is \emph{special} if each of its triple lines is an open
$1$-cell and each of its $2$-components is an open $2$-cell.

 The problem of calculating the complexity for $3$-manifolds is
actual but it is very hard. Exact values of the complexity are
presently known only for some computer-generated
censuses~\cite{Matveev05}, for two infinite series of hyperbolic
manifolds with boundary~\cite{Anisov05, FrMartPetr03}, for certain
infinite series of lens spaces, and for generalized quaternion
spaces~\cite{JacoRubTill09, JacoRubTillPrepr}.

 In the paper we find the exact values of the complexity for a family
of manifolds $M_{n,k}$ with boundary constructed by Paoluzzi and
Zimmermann~\cite{PaolZim96}. Every $M_{n,k}$, $n \geqslant 4$, has
one boundary component, its complexity $c(M_{n,k})$ is equal to $n$,
and its Euler characteristic is equal to $2-n$. Earlier known
examples of manifolds with one boundary component and complexity $n$
\cite{FrMartPetr03} have the Euler characteristic $1-n$.

 To prove the main result we use the $\varepsilon$-invariant of
$3$-manifolds (see~\cite[section 8.1.3]{MatBook}). Let $P$ be a
special spine of a compact manifold $M$. Denote by $\mathcal{F}(P)$
the set of all simple subpolyhedra of $P$ including $P$ and the
empty set. Let us associate to each simple polyhedron $Q\subset P$
its $\varepsilon$-weight $w_{\varepsilon}(Q) =
(-1)^{V(Q)}\varepsilon^{\chi(Q)-V(Q)}$, where $V(Q)$ is the number
of vertices of $Q$, $\chi(Q)$ is its Euler characteristic, and
$\varepsilon = (1+\sqrt{5})/2$ is a solution of the equation
$\varepsilon^2=\varepsilon+1$. The $\varepsilon$-invariant $t(M)$ of
$M$ is given by the formula $t(M) = \sum_{Q\in \mathcal{F} (P)}
w_{\varepsilon}(Q)$.

\section{Paoluzzi-Zimmermann manifolds}

For every $n\geqslant 3$ consider an $n$-gonal bipyramid which is
the union of pyramids $N L_{0} L_{1} \ldots L_{n-1}$ and $S L_{0}
L_{1} \ldots L_{n-1}$ along the common $n$-gonal base $L_{0} L_{1}
\ldots L_{n-1}$. Let $k$ be such integer that $0 \leqslant k < n$
and $\gcd (n, 2-k)=1$. We identify the faces of $\mathcal B_{n}$ in
pairs: for each $i = 0, \ldots, n-1$ the face $L_{i} L_{i+1} N$ gets
identified with the face $S L_{i+k} L_{i+k+1}$ by a transformation
(homeomorphism of faces) which we shall denote by $y_{i}$ (indices
are taken $\mod n$ and the vertices are glued together in the order
in which they are written).

\begin{figure}[htb]
\centering \unitlength=0.4mm
\begin{picture}(300,205)(0,-40)
\thicklines \qbezier(0,10)(0,10)(60,10)
\qbezier(60,10)(60,10)(120,10) \qbezier(120,10)(120,10)(180,10)
\qbezier(180,10)(180,10)(240,10) \qbezier(240,10)(240,10)(300,10)
\qbezier(0,10)(0,10)(0,40) \qbezier(60,10)(60,10)(60,40)
\qbezier(120,10)(120,10)(120,40) \qbezier(180,10)(180,10)(180,40)
\qbezier(240,10)(240,10)(240,40) \qbezier(300,10)(300,10)(300,40)
\qbezier(0,110)(0,110)(0,80) \qbezier(60,110)(60,110)(60,80)
\qbezier(120,110)(120,110)(120,80)
\qbezier(180,110)(180,110)(180,80)
\qbezier(240,110)(240,110)(240,80)
\qbezier(300,110)(300,110)(300,80)
\qbezier(0,40)(0,40)(-20,60) \qbezier(0,40)(0,40)(20,60)
\qbezier(0,80)(0,80)(-20,60) \qbezier(0,80)(0,80)(20,60)
\qbezier(20,60)(20,60)(40,60) \qbezier(60,40)(60,40)(40,60)
\qbezier(60,40)(60,40)(80,60) \qbezier(60,80)(60,80)(40,60)
\qbezier(60,80)(60,80)(80,60) \qbezier(80,60)(80,60)(100,60)
\qbezier(120,40)(120,40)(100,60) \qbezier(120,40)(120,40)(140,60)
\qbezier(120,80)(120,80)(100,60) \qbezier(120,80)(120,80)(140,60)
\qbezier(140,60)(140,60)(160,60) \qbezier(180,40)(180,40)(160,60)
\qbezier(180,40)(180,40)(200,60) \qbezier(180,80)(180,80)(160,60)
\qbezier(180,80)(180,80)(200,60) \qbezier(200,60)(200,60)(220,60)
\qbezier(240,40)(240,40)(220,60) \qbezier(240,40)(240,40)(260,60)
\qbezier(240,80)(240,80)(220,60) \qbezier(240,80)(240,80)(260,60)
\qbezier(260,60)(260,60)(280,60) \qbezier(300,40)(300,40)(280,60)
\qbezier(300,80)(300,80)(280,60)
\qbezier(0,110)(0,110)(180,110) \qbezier(180,110)(180,110)(300,110)
{\thinlines \qbezier(0,110)(0,110)(15,115)
\qbezier(30,120)(30,120)(45,125) \qbezier(60,130)(60,130)(75,135)
\qbezier(90,140)(90,140)(105,145)
\qbezier(120,150)(120,150)(150,160)
\qbezier(60,110)(60,110)(69,115) \qbezier(78,120)(78,120)(87,125)
\qbezier(96,130)(96,130)(105,135)
\qbezier(114,140)(114,140)(123,145)
\qbezier(132,150)(132,150)(150,160)
\qbezier(120,110)(120,110)(123,115)
\qbezier(126,120)(126,120)(129,125)
\qbezier(132,130)(132,130)(135,135)
\qbezier(138,140)(138,140)(141,145)
\qbezier(144,150)(144,150)(150,160)
\qbezier(180,110)(180,110)(177,115)
\qbezier(174,120)(174,120)(171,125)
\qbezier(168,130)(168,130)(165,135)
\qbezier(162,140)(162,140)(159,145)
\qbezier(156,150)(156,150)(150,160)
\qbezier(240,110)(240,110)(231,115)
\qbezier(222,120)(222,120)(213,125)
\qbezier(204,130)(204,130)(195,135)
\qbezier(186,140)(186,140)(177,145)
\qbezier(168,150)(168,150)(150,160)
\qbezier(300,110)(300,110)(285,115)
\qbezier(270,120)(270,120)(255,125)
\qbezier(240,130)(240,130)(225,135)
\qbezier(210,140)(210,140)(195,145)
\qbezier(180,150)(180,150)(150,160)
\qbezier(0,10)(0,10)(15,5) \qbezier(30,0)(30,0)(45,-5)
\qbezier(60,-10)(60,-10)(75,-15) \qbezier(90,-20)(90,-20)(105,-25)
\qbezier(120,-30)(120,-30)(150,-40)
\qbezier(60,10)(60,10)(69,5) \qbezier(78,0)(78,0)(87,-5)
\qbezier(96,-10)(96,-10)(105,-15)
\qbezier(114,-20)(114,-20)(123,-25)
\qbezier(132,-30)(132,-30)(150,-40)
\qbezier(120,10)(120,10)(123,5) \qbezier(126,0)(126,0)(129,-5)
\qbezier(132,-10)(132,-10)(135,-15)
\qbezier(138,-20)(138,-20)(141,-25)
\qbezier(144,-30)(144,-30)(150,-40)
\qbezier(180,10)(180,10)(177,5) \qbezier(174,0)(174,0)(171,-5)
\qbezier(168,-10)(168,-10)(165,-15)
\qbezier(162,-20)(162,-20)(159,-25)
\qbezier(156,-30)(156,-30)(150,-40)
\qbezier(240,10)(240,10)(231,5) \qbezier(222,0)(222,0)(213,-5)
\qbezier(204,-10)(204,-10)(195,-15)
\qbezier(186,-20)(186,-20)(177,-25)
\qbezier(168,-30)(168,-30)(150,-40)
\qbezier(300,10)(300,10)(285,5) \qbezier(270,0)(270,0)(255,-5)
\qbezier(240,-10)(240,-10)(225,-15)
\qbezier(210,-20)(210,-20)(195,-25)
\qbezier(180,-30)(180,-30)(150,-40) }
\put(150,165){\makebox(0,0)[cc]{$N$}}
\put(150,-45){\makebox(0,0)[cc]{$S$}} {\thinlines
\qbezier(0,40)(0,40)(0,50) \qbezier(0,55)(0,55)(0,65)
\qbezier(0,70)(0,70)(0,80) \qbezier(-20,60)(-20,60)(-10,60)
\qbezier(-5,60)(-5,60)(5,60) \qbezier(10,60)(10,60)(20,60)
\put(-5,65){\makebox(0,0)[cc]{$L_{4}$}}
\qbezier(60,40)(60,40)(60,50) \qbezier(60,55)(60,55)(60,65)
\qbezier(60,70)(60,70)(60,80) \qbezier(40,60)(40,60)(50,60)
\qbezier(55,60)(55,60)(65,60) \qbezier(70,60)(70,60)(80,60)
\put(55,65){\makebox(0,0)[cc]{$L_{0}$}}
\qbezier(120,40)(120,40)(120,50) \qbezier(120,55)(120,55)(120,65)
\qbezier(120,70)(120,70)(120,80) \qbezier(100,60)(100,60)(110,60)
\qbezier(115,60)(115,60)(125,60) \qbezier(130,60)(130,60)(140,60)
\put(115,65){\makebox(0,0)[cc]{$L_{1}$}}
\qbezier(180,40)(180,40)(180,50) \qbezier(180,55)(180,55)(180,65)
\qbezier(180,70)(180,70)(180,80) \qbezier(160,60)(160,60)(170,60)
\qbezier(175,60)(175,60)(185,60) \qbezier(190,60)(190,60)(200,60)
\put(175,65){\makebox(0,0)[cc]{$L_{2}$}}
\qbezier(240,40)(240,40)(240,50) \qbezier(240,55)(240,55)(240,65)
\qbezier(240,70)(240,70)(240,80) \qbezier(220,60)(220,60)(230,60)
\qbezier(235,60)(235,60)(245,60) \qbezier(250,60)(250,60)(260,60)
\put(235,65){\makebox(0,0)[cc]{$L_{3}$}} }
\put(0,120){\makebox(0,0)[cc]{$A_4$}}
\put(60,120){\makebox(0,0)[cc]{$A_0$}}
\put(120,120){\makebox(0,0)[cc]{$A_1$}}
\put(180,120){\makebox(0,0)[cc]{$A_2$}}
\put(240,120){\makebox(0,0)[cc]{$A_3$}}
\put(300,120){\makebox(0,0)[cc]{$A_4$}}
\put(0,0){\makebox(0,0)[cc]{$F_3$}}
\put(60,0){\makebox(0,0)[cc]{$F_4$}}
\put(120,0){\makebox(0,0)[cc]{$F_0$}}
\put(180,0){\makebox(0,0)[cc]{$F_1$}}
\put(240,0){\makebox(0,0)[cc]{$F_2$}}
\put(300,0){\makebox(0,0)[cc]{$F_3$}}
\put(10,80){\makebox(0,0)[cc]{$B_4$}}
\put(70,80){\makebox(0,0)[cc]{$B_0$}}
\put(130,80){\makebox(0,0)[cc]{$B_1$}}
\put(190,80){\makebox(0,0)[cc]{$B_2$}}
\put(250,80){\makebox(0,0)[cc]{$B_3$}}
\put(310,80){\makebox(0,0)[cc]{$B_4$}}
\put(10,40){\makebox(0,0)[cc]{$E_3$}}
\put(70,40){\makebox(0,0)[cc]{$E_4$}}
\put(130,40){\makebox(0,0)[cc]{$E_0$}}
\put(190,40){\makebox(0,0)[cc]{$E_1$}}
\put(250,40){\makebox(0,0)[cc]{$E_2$}}
\put(310,40){\makebox(0,0)[cc]{$E_3$}}
\put(20,50){\makebox(0,0)[cc]{$C_4$}}
\put(80,50){\makebox(0,0)[cc]{$C_0$}}
\put(140,50){\makebox(0,0)[cc]{$C_1$}}
\put(200,50){\makebox(0,0)[cc]{$C_2$}}
\put(260,50){\makebox(0,0)[cc]{$C_3$}}
\put(-20,50){\makebox(0,0)[cc]{$D_3$}}
\put(40,50){\makebox(0,0)[cc]{$D_4$}}
\put(100,50){\makebox(0,0)[cc]{$D_0$}}
\put(160,50){\makebox(0,0)[cc]{$D_1$}}
\put(220,50){\makebox(0,0)[cc]{$D_2$}}
\put(280,50){\makebox(0,0)[cc]{$D_3$}}
\put(30,90){\makebox(0,0)[cc]{$\mathcal X_4$}}
\put(90,90){\makebox(0,0)[cc]{$\mathcal X_0$}}
\put(150,90){\makebox(0,0)[cc]{$\mathcal X_1$}}
\put(210,90){\makebox(0,0)[cc]{$\mathcal X_2$}}
\put(270,90){\makebox(0,0)[cc]{$\mathcal X_3$}}
\put(30,30){\makebox(0,0)[cc]{$\mathcal X_4'$}}
\put(90,30){\makebox(0,0)[cc]{$\mathcal X_0'$}}
\put(150,30){\makebox(0,0)[cc]{$\mathcal X_1'$}}
\put(210,30){\makebox(0,0)[cc]{$\mathcal X_2'$}}
\put(270,30){\makebox(0,0)[cc]{$\mathcal X_3'$}}
\end{picture}
\caption{ The truncated bipyramid $\mathcal A_{5}$.} \label{pyramid}
\end{figure}
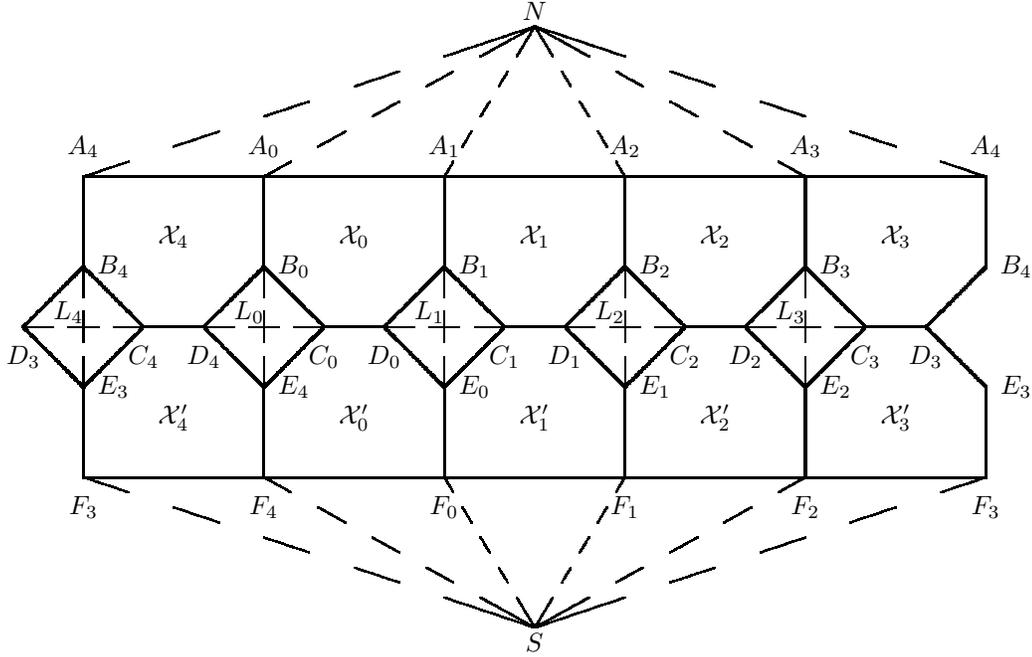

 Identifications $\{ y_{0}, y_{1}, \ldots , y_{n-1} \}$ define the
equivalence relations on the sets of faces, edges, and vertices of
the bipyramid. It is easy to see that all the faces are split into
pairs equivalent faces, all edges and all vertices become identified
to a single edge resp. vertex (this is guaranteed by the above
conditions on $k$). Denote the resulting identification spaces by
$M_{n,k}^{*}$. It is an orientable pseudomanifold with one singular
point, since $\chi(M_{n,k}^{*}) = 1 - 1 + n - 1 = n-1 \neq 0$.
Cutting of a cone neighborhood of the singular point from
$M_{n,k}^{*}$ we get a compact manifold $M_{n,k}$ with one boundary
component. It arises from the truncated bipyramid $\mathcal A_{n}$
with vertices $A_{i}$, $B_{i}$, $C_{i}$, $D_{i}$, $E_{i}$, $F_{i}$,
$i = 0, \ldots, n-1$ (see Fig.~\ref{pyramid} for the case $n=5$,
where the right and left sides are assumed identified). Pairwise
identifications $\{ y_{0}, y_{1}, \ldots , y_{n-1} \}$ of the faces
of $\mathcal B_{n}$ induce pairwise identifications $\{ x_{0},
x_{1}, \ldots x_{n-1}\}$ of the faces of $\mathcal A_{n}$, where
$x_{i}$ identifies faces $\mathcal X_{i} =C_{i} D_{i} B_{i+1}
A_{i+1} A_{i} B_{i} $ and $\mathcal X_{k+i}' = F_{k+i-1} E_{k+i-1}
C_{k+i} D_{k+i} E_{k+i} F_{k+i}$ (indices are taken $\mod n$ and the
vertices are glued together in the order in which they are written).
In this case the faces $A_{0} A_{1} \ldots A_{n-1}$, $F_{0} F_{1}
\ldots F_{n-1}$, and $B_{i} C_{i} E_{i} D_{i-1}$, $i=0, \ldots,
n-1$, form the boundary $\partial M_{n,k}$ of $M_{n,k}$. It is easy
to check that $\partial M_{n,k}$ is a closed orientable surface of
genus $n-1$. According to \cite{PaolZim96}, for every $n \geqslant
3$ the manifold $M_{n,k}$ is hyperbolic and $M_{n,k} = \mathbb H^{3}
/ G_{n,k}$, where $G_{n,k}$ is an $n$-generated group with one
defining relation

$$
G_{n,k} = \langle x_{0}, \ldots, x_{n-1} \, \mid \,
\prod_{i=0}^{n-1} \, x_{i(2-k)} x_{i(2-k)+1}^{-1} x_{(i+1)(2-k)
-1}^{-1} = 1 \rangle.
$$

 It is known \cite{PaolZim96} that $M_{n,k}$ and $M_{n', k'}$ are
homeomorphic (or equivalently, isometric) if and only if $n = n'$
and $k\equiv k' \mod n$. The manifold $M_{3,1}$ is constructed in
\cite{Thurston} by identifying faces of $2$ truncated hyperbolic
tetrahedra. It was shown in \cite{Ko-Mi} that $M_{3,1}$ is one of
the compact manifolds which have minimal volume among all compact
hyperbolic $3$-manifolds with totally geodesic boundary,
$\textrm{vol} (M_{3,1}) \approx 6,451998$. The volumes of $M_{n,k}$
for $n \leqslant 82$ are presented in \cite{Ushijima}.

\section{Calculating the complexity}

\begin{theorem} \label{theorem:main}
For every integer $n \geqslant 4$ we have $c(M_{n,k})=n$.
\end{theorem}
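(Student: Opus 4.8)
The plan is to prove the two inequalities $c(M_{n,k})\le n$ and $c(M_{n,k})\ge n$ separately; the first is a construction and the second rests on the $\varepsilon$-invariant.

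For the upper bound I would exhibit an explicit almost simple (in fact special) spine $P_n$ of $M_{n,k}$ with exactly $n$ true vertices. The truncated bipyramid $\mathcal A_n$ together with the pairings $x_0,\dots,x_{n-1}$ is a one-cell decomposition of $M_{n,k}$; collapsing $M_{n,k}$ onto a suitable $2$-dimensional subpolyhedron of (a modification of) its dual handle decomposition gives a candidate $P_n$ with $M_{n,k}\setminus P_n\cong\partial M_{n,k}\times(0,1]$. The key is to arrange the construction so that $P_n$ is special with as few true vertices as possible. Since the defining relator of $G_{n,k}$ is a cyclic product of $n$ identical three-letter blocks, it is natural to assemble $P_n$ from $n$ congruent elementary pieces, each carrying one true vertex, glued cyclically; for $n=4$ one may simply take the special spine dual to the decomposition of $M_{4,k}$ into four truncated tetrahedra, while for $n\ge5$ a less obvious, $\mathbb Z/n$-symmetric arrangement is needed. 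One then checks directly that $P_n$ is special with $n$ true vertices, $2n$ triple edges and $2$ two-components (consistent with $\chi(P_n)=\chi(M_{n,k})=2-n$), so that $c(M_{n,k})\le n$.

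For the lower bound, note that $M_{n,k}$ is hyperbolic, hence irreducible and boundary-irreducible, so by Matveev's theory $c(M_{n,k})$ is realised by a special spine and equals the minimal number of true vertices among special spines of $M_{n,k}$. I would then compute the $\varepsilon$-invariant using the spine $P_n$: enumerate the family $\mathcal F(P_n)$ of simple subpolyhedra of $P_n$ and sum the weights $w_\varepsilon(Q)=(-1)^{V(Q)}\varepsilon^{\chi(Q)-V(Q)}$. The polyhedron $P_n$ itself contributes $(-1)^n\varepsilon^{\chi(M_{n,k})-n}=(-1)^n\varepsilon^{2-2n}$, the term with the smallest exponent, whereas every proper simple subpolyhedron of $P_n$ contributes a strictly larger power of $\varepsilon$. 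Carrying out the bookkeeping — organised according to which true vertices and triple edges a subpolyhedron uses, and kept tractable by the cyclic symmetry and by the fact that $P_n$ has only two $2$-components — gives a closed formula for $t(M_{n,k})$ in which the monomial $\varepsilon^{2-2n}$ occurs with coefficient $\pm1$. Substituting this value into the general estimate bounding complexity below in terms of the $\varepsilon$-invariant (\cite[section~8.1.3]{MatBook}) yields $c(M_{n,k})\ge n$, and with the upper bound we conclude $c(M_{n,k})=n$.

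The step I expect to be hardest is the exact identification of $t(M_{n,k})$, in particular showing that the extremal contribution $(-1)^n\varepsilon^{2-2n}$ of $P_n$ is not cancelled by proper simple subpolyhedra with the same value of $\chi(Q)-V(Q)$ — for example those obtained from $P_n$ by deleting one $2$-component and then suppressing the true vertices whose links degenerate to a $\theta$-graph. Gaining enough control of all of $\mathcal F(P_n)$ to pin $t(M_{n,k})$ down precisely is where the cyclic symmetry and the very economical structure of $P_n$ (two $2$-components, $2n$ triple edges, $n$ true vertices) must be used. By contrast, verifying that $P_n$ is a spine and counting its true vertices for the upper bound is routine, though notation-heavy.
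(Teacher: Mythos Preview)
Your upper bound is essentially the paper's: cut $\mathcal B_n$ into $n$ tetrahedra $\mathcal T_i=NSL_iL_{i+1}$ and take the dual special spine $P_{n,k}$, which has $n$ true vertices and two $2$-components, so $c(M_{n,k})\le n$.

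The lower bound, however, has a real gap. You appeal to a ``general estimate bounding complexity below in terms of the $\varepsilon$-invariant,'' but no such blanket inequality is stated in \cite[section~8.1.3]{MatBook}, and the mere presence of the monomial $(-1)^n\varepsilon^{2-2n}$ in $t(M_{n,k})$ does not by itself force $c(M_{n,k})\ge n$. The paper's argument is different and avoids this issue entirely. One first uses the Euler characteristic: a special spine $P'$ realising the complexity has $\chi(P')=\chi(M_{n,k})=2-n$, and for a special polyhedron with $V$ true vertices and $d$ two-components one has $\chi=d-V$; hence $c(M_{n,k})=n+d-2$. The entire lower bound therefore reduces to proving $d\ge 2$. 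Now assume $d=1$ (so $c=n-1$). Then $\mathcal F(P')=\{\emptyset,P'\}$, giving $t(M_{n,k})=1+(-1)^{n-1}\varepsilon^{3-2n}$. Comparing with the value $t(M_{n,k})=1+\varepsilon^{1-n}+(-1)^n\varepsilon^{2-2n}$ computed from $P_{n,k}$ (whose family $\mathcal F(P_{n,k})$ has exactly three elements, not the larger list your outline anticipates) forces $\varepsilon^{n-3}=(-1)^{n-1}$, which fails for $n\ge4$.

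So the $\varepsilon$-invariant enters not as a direct complexity bound but as a contradiction device against the single borderline case $d=1$ left over after the Euler-characteristic count. That Euler-characteristic reduction is the missing idea in your outline; once it is in place, the $\varepsilon$-computation you need is much shorter than you feared (three subpolyhedra, no cancellation analysis required).
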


\begin{proof}
 Let us prove that $c(M_{n,k})\leqslant n$ for every integer $n \geqslant 3$.
To do that it suffices to construct a special spine of $M$ with $n$
true vertices. Cut $\mathcal B_{n}$ into $n$ tetrahedra $\mathcal
T_{i} = N S L_{i} L_{i+1}$, where $i = 0, 1, \ldots, n-1$. For each
$\mathcal T_{i}$ consider the union of the links of all four
vertices of $\mathcal T_{i}$ in the first barycentric subdivision.
$M_{n,k}^{*}$ can be obtained by gluing the tetrahedra $\mathcal
T_{0}, \ldots , \mathcal T_{n-1}$ via affine homeomorphisms of the
faces. This gluing determines a pseudotriangulation $\mathcal T$ of
$M_{n,k}^{*}$ and induces a gluing of the corresponding polyhedra
$R_i$, $i = 0, \ldots, n-1$, together. We get a special spine
$P_{n,k} = \cup_i R_i$ of $M_{n,k}$. Since every $R_i$ is
homeomorphic to a cone over $K_4$, the spine $P_{n,k}$ has exactly
$n$ true vertices.

 Demonstrate that the inequality $c(M_{n,k}) \geqslant n$ holds
for every $n \geqslant 4$. Since $M_{n,k}$ is hyperbolic, it is
irreducible, has incompressible boundary, and contains no essential
annuli. It follows from \cite[Theorem 2.2.4]{MatBook} that there
exists a special spine $P'$ of $M_{n,k}$ with $c(M_{n,k})$ true
vertices. Denote by $d$ the number of $2$-components of $P'$.
Calculating the Euler characteristic of $M_{n,k}$, we get
$2-n=\chi(P_{n,k})=\chi(M_{n,k})=\chi(P')=d-c(M_{n,k})$. Hence
$c(M_{n,k})=n+d-2$. It remains to show that $d\geqslant 2$.

 On the contrary, suppose that $d=1$. Let us calculate
the $\varepsilon$-invariant $t(M_{n,k})$ in two ways from the spines
$P_{n,k}$ and $P'$. By construction, $P_{n,k}$ has two
$2$-components, and each of them corresponds to an edge of $\mathcal
T$. Denote by $\alpha$ and $\beta$ $2$-components of $P_{n,k}$,
where $\alpha$ corresponds to the edge $NS$. Note that for
describing an arbitrary simple subpolyhedron $Q$ of $P_{n,k}$ it is
sufficient to specify which $2$-components of $P_{n,k}$ are
contained in $Q$ (see~\cite[section~8.1.3]{MatBook}). A simple
analysis shows that there are three simple subpolyhedra of
$P_{n,k}$.
\begin{enumerate}
 \item The empty subpolyhedron with the $\varepsilon$-weight 1.
 \item The whole $P_{n,k}$ with the $\varepsilon$-weight equals to
$(-1)^n\varepsilon^{2-2n}$, since $V(P_{n,k})=n$ and
$\chi(P_{n,k})=2-n$.
 \item The subpolyhedron $P_{n,k}\setminus \alpha$. It contains
no vertices and has the Euler characteristic $\chi (P_{n,k}\setminus
\alpha) = \chi(P_{n,k})-1=1-n$. Hence it has $\varepsilon$-weight
$\varepsilon^{1-n}$.
\end{enumerate}
Summing up, we get
$t(M_{n,k})=(-1)^n\varepsilon^{2-2n}+\varepsilon^{1-n}+1$.

 Now let us calculate the $\varepsilon$-invariant $t(M_{n,k})$
from $P'$. It is easy to see that $\mathcal{F}(P') = \{\emptyset,
P'\}$ and $\chi(P')=2-n$, since the spine $P'$ has only one
$2$-component and $n-1$ true vertices. Summing up
$w_{\varepsilon}(\emptyset)=1$ and
$w_{\varepsilon}(P')=(-1)^{n-1}\varepsilon^{3-2n}$, we get
$t(M_{n,k}) = (-1)^{n-1} \varepsilon^{3-2n}+1$.

 Comparing two obtained formulas for $t(M_{n,k})$, we get
\linebreak $\varepsilon^{n-1} = (-1)^{n-1}(\varepsilon+1)$ that is
equivalent to $\varepsilon^{n-3} = (-1)^{n-1}$. It is obvious that
this equality is true only for the case $n=3$, which contradicts our
assumption that $n \geqslant 4$. Hence, $d>1$, and $c(M_{n,k})
\geqslant n$.
\end{proof}

\begin{prop}
 The following equality holds: $c(M_{3,1})=2$.
\end{prop}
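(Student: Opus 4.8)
The plan is to prove the two inequalities $c(M_{3,1})\le 2$ and $c(M_{3,1})\ge 2$ separately; together they give the claimed value.

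For the upper bound I would exploit the fact, recalled above, that $M_{3,1}$ is obtained by identifying in pairs the hexagonal faces of two truncated hyperbolic tetrahedra (Thurston's construction). Applying to this pair of tetrahedra exactly the recipe used in the proof of Theorem~\ref{theorem:main} --- namely, in each tetrahedron take the union of the links of its four vertices in the first barycentric subdivision, and then glue the two resulting polyhedra along the induced identifications of faces --- produces a special spine of $M_{3,1}$. Each tetrahedron contributes precisely one point with link $K_4$ (the image of its barycentre), so this spine has exactly two true vertices; hence $c(M_{3,1})\le 2$. Alternatively, one could note that $M_{3,1}$ occurs in the known census of hyperbolic $3$-manifolds with geodesic boundary built from two truncated tetrahedra.

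For the lower bound I would repeat the Euler characteristic argument from the proof of Theorem~\ref{theorem:main}. Since $M_{3,1}$ is hyperbolic it is irreducible, has incompressible boundary, contains no essential annuli, and in particular is not a handlebody, so $c(M_{3,1})\ge 1$ and, by~\cite[Theorem~2.2.4]{MatBook}, $M_{3,1}$ admits a special spine $P'$ with exactly $c(M_{3,1})$ true vertices. Such a $P'$ has at least one $2$-component (any neighbourhood of a true vertex, being a cone over $K_4$, already contains non-singular points), so if $d$ denotes the number of its $2$-components then $d\ge 1$, and the identity $2-3=\chi(M_{3,1})=\chi(P')=d-c(M_{3,1})$ yields $c(M_{3,1})=d+1\ge 2$.

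Combining the bounds gives $c(M_{3,1})=2$. There is no genuine obstacle here: the only points to check are that the two-tetrahedron construction really yields a spine of $M_{3,1}$ with no extra true vertices --- the same mechanism already verified for the bipyramid $\mathcal B_n$ in Theorem~\ref{theorem:main}, now supplied by Thurston's identification --- and the elementary observation forcing $d\ge 1$. It is worth remarking that for $n=3$ the $\varepsilon$-invariant computation in Theorem~\ref{theorem:main} collapses to the tautology $\varepsilon^{0}=1$, so it imposes no constraint on $d$ beyond $d\ge 1$; this is precisely why here $d=1$ is attained and the value drops to $n-1=2$.
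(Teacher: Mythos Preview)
Your argument is correct and matches the paper's approach. The paper obtains the upper bound in the same way---observing that $\mathcal B_{3}$ is the union of the two tetrahedra $NL_{0}L_{1}L_{2}$ and $SL_{0}L_{1}L_{2}$, which yields a special spine with two true vertices (and, it notes, a single $2$-component)---and for the lower bound it simply cites~\cite{FrMartPetr03} rather than spelling out the Euler-characteristic inequality $c(M_{3,1})=d+1\ge 2$ that you supply explicitly.
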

\begin{proof}
 Since $\mathcal B_{3}$ is the union of the tetrahedra $N L_{0} L_{1}
L_{2}$ and $S L_{0} L_{1} L_{2}$, the manifold $M_{3,1}$ has a
special spine with two true vertices and with only one
$2$-component. Hence $c(M_{3,1})=2$ (see~\cite{FrMartPetr03}).
\end{proof}

\begin{corollary}
 For every $n \geqslant 3$ we have $t(M_{n,k}) =
(-1)^{n} \varepsilon^{2-2n} + \varepsilon^{1-n} + 1$.
\end{corollary}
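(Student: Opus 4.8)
The plan is to extract the formula for every $n\geqslant 4$ directly from the proof of Theorem~\ref{theorem:main}, and then to settle the single remaining value $n=3$ by a short computation from the two-tetrahedron special spine that already appears in the proof of the Proposition.

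For $n\geqslant 4$ I would point out that, in the proof of Theorem~\ref{theorem:main}, the number $t(M_{n,k})$ was in fact computed from the special spine $P_{n,k}$ by listing its three simple subpolyhedra $\emptyset$, $P_{n,k}$, $P_{n,k}\setminus\alpha$ and adding their $\varepsilon$-weights, with the outcome $(-1)^{n}\varepsilon^{2-2n}+\varepsilon^{1-n}+1$. That computation rests only on the facts that $P_{n,k}$ has $n$ true vertices, exactly two $2$-components, and Euler characteristic $2-n$; it uses neither the hypothesis $n\geqslant 4$ nor the hypothesis $d=1$ introduced there for contradiction. Hence it is valid for every $n\geqslant 4$, and the Corollary follows for all such $n$.

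It remains to treat $n=3$. Since $\mathcal B_{3}$ is the union of the two tetrahedra $NL_{0}L_{1}L_{2}$ and $SL_{0}L_{1}L_{2}$, for each admissible $k$ the manifold $M_{3,k}$ carries a special spine $P'$ with two true vertices and a single $2$-component, obtained from this decomposition as in the proof of the Proposition; in particular $V(P')=2$ and $\chi(P')=\chi(M_{3,k})=-1$. By the general description of simple subpolyhedra of a special polyhedron (see~\cite[section 8.1.3]{MatBook}), a special spine with only one $2$-component satisfies $\mathcal{F}(P')=\{\emptyset,P'\}$, since deleting that $2$-component leaves a $1$-dimensional polyhedron, which is not simple. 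Therefore
$$
t(M_{3,k})=w_{\varepsilon}(\emptyset)+w_{\varepsilon}(P')=1+(-1)^{2}\varepsilon^{\chi(P')-V(P')}=1+\varepsilon^{-3}.
$$
Using $\varepsilon^{2}=\varepsilon+1$ one then rewrites $\varepsilon^{-3}=(\varepsilon^{2}-1)\varepsilon^{-4}=\varepsilon^{-2}-\varepsilon^{-4}$, which turns the last display into $t(M_{3,k})=(-1)^{3}\varepsilon^{2-6}+\varepsilon^{1-3}+1$, the value asserted for $n=3$.

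The step I expect to need the most care is the bookkeeping in the second paragraph: one must re-read the proof of Theorem~\ref{theorem:main} to be sure that the enumeration of the simple subpolyhedra of $P_{n,k}$ and the subsequent weight count there are genuinely unconditional, i.e. do not secretly use the hypothesis $d=1$; and one uses that $t(\cdot)$ is independent of the chosen special spine, so that $P_{n,k}$ may legitimately be used for $n\geqslant 4$ and the smaller spine $P'$ for $n=3$. Both points are immediate on inspection, so no substantially new argument is required.
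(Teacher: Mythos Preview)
Your argument is correct, but the separate treatment of $n=3$ is an unnecessary detour. In the proof of the Theorem the spine $P_{n,k}$ is constructed explicitly for every $n\geqslant 3$ (the upper bound $c(M_{n,k})\leqslant n$ is stated there for $n\geqslant 3$), and the enumeration of its three simple subpolyhedra together with their $\varepsilon$-weights uses only the structural facts about $P_{n,k}$ itself. Hence the single computation $t(M_{n,k})=(-1)^{n}\varepsilon^{2-2n}+\varepsilon^{1-n}+1$ already covers $n=3$, and this is how the paper reads the Corollary off the Theorem.

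Your route via the two-tetrahedron spine and the algebraic identity $\varepsilon^{-3}=\varepsilon^{-2}-\varepsilon^{-4}$ is valid, but it carries a small extra cost: the Proposition only asserts the existence of that special spine for $M_{3,1}$, so you are tacitly extending the construction to the other admissible parameter $k=0$. This is true (the decomposition of $\mathcal B_{3}$ into two tetrahedra is independent of $k$), but it is an additional check that the direct use of $P_{3,k}$ avoids.
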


\end{document}